\newtheorem{theorem}{Theorem}[section]
\newtheorem{proposition}{Proposition}[section]
\newtheorem{lemma}{Lemma}[section]
\theoremstyle{remark}
\newtheorem{remark}{Remark}[section]
\theoremstyle{corollary}
\newtheorem{corollary}{Corollary}[section]
\numberwithin{equation}{section}
\newcommand{\lap}{\nabla^{2}}
\newcommand{\grad}{\nabla}
\newcommand\K{{\mathcal{K}}}
\newcommand{\dist}{\hspace{.5pt}\text{dist}}
\title{Convergence and Stability of the Inverse Scattering Series for Diffuse Waves}
\author{Shari Moskow}
\address{Drexel University} 
\email{moskow@math.drexel.edu} 
\author{John C. Schotland}
\address{University of Pennsylvania} 
\email{schotland@seas.upenn.edu}
\date{\today}
\subjclass{Primary: 34A55; Secondary: 85A25 }
\keywords{Inverse scattering, radiative transport}
\begin{document}

\begin{abstract}
We analyze the inverse scattering series for diffuse waves in random media. In previous work the inverse series was used to develop fast, direct image reconstruction algorithms in optical tomography. Here we characterize the convergence, stability and approximation error of the series.  
\end{abstract}

\maketitle

\markboth{Shari Moskow and John C. Schotland}
{Convergence and Stability of the Inverse Scattering Series for Diffuse Waves}

\section{Introduction}

The inverse scattering problem (ISP) for diffuse waves consists of recovering the spatially-varying absorption of the interior of a bounded domain from measurements taken on its boundary. The problem has been widely studied in the context of optical tomography---an emerging biomedical imaging modality which uses near-infrared light as a probe of tissue structure and function~\cite{Singer_1990, Arridge_1999}. More generally, diffusion of multiply-scattered waves is a nearly universal feature of wave propagation in random media~\cite{VanR_1999}.

The ISP in optical tomography is usually formulated as a nonlinear optimization problem.   At present, the iterative methods which are used to solve this problem are not well understood mathematically, since error estimates and convergence results are not known. In this paper we will show that, to some extent, it is possible to fill this gap. In particular, we will study the solution to the ISP which arises from inversion of the Born series. In previous work we have utilized such expansions as tools to develop fast, direct image reconstruction algorithms~\cite{Markel_2003}. Here we characterize their convergence, stability and reconstruction error.

Early work on series solutions to the quantum-mechanical inverse backscattering problem was carried out by Jost and Kohn~\cite{Jost_1952}, Moses~\cite{Moses_1956} and Prosser~\cite{Prosser_1969}. There has also been more recent work on the inverse medium problem for acoustic waves~\cite{Devaney_1982, Devaney_2000, Weglein_2003}. However, the procedures employed in these papers are purely formal~\cite{Novikov_1987}. 

The remainder of this paper is organized as follows. In Section~2 we develop the scattering theory of diffuse waves in an inhomogeneous medium---this corresponds to the forward problem of optical tomography.  We then derive various estimates which are later used to study the convergence of the Born series and its inverse. The inversion of the Born series is taken up in Section~3, where we also obtain our main results on the convergence, stability and approximation error of the inverse series. In Section~4 we discuss numerical applications of our results. Finally, in Section~5 we show that our results extend to the case of the ISP for propagating scalar waves. 

\section{Forward Problem}

Let $\Omega$ be a bounded domain in $\mathbb{R}^n$, for $n\ge2$, with a smooth boundary $\partial\Omega$. We consider the propagation of a diffuse wave in an absorbing medium whose energy density $u$ satisfies the time-independent diffusion equation
\begin{equation}
\label{diff_eq}
-\lap u(x) + k^2(1+\eta(x))u(x) = 0 \ , \ \ \ x \in \Omega \ ,
\end{equation}
where  the diffuse wave number $k$ is a positive constant and $\eta(x) \ge -1$ for all $x\in\Omega$. The function $\eta$ is the spatially varying part of the absorption coefficient which is assumed to be supported in a closed ball $B_a$ of radius $a$. The energy density is also taken to obey the boundary condition
\begin{equation}
\label{diff_eq_bc}
u(x) + \ell \nu\cdot\grad u(x) = 0 \ , \ \ \ x \in \partial\Omega \ ,
\end{equation}
where $\nu$ is the unit outward normal to $\partial\Omega$ and the extrapolation length $\ell$ is a nonnegative constant. Note that $k$ and $\eta$ are related to the absorption and reduced scattering coefficients $\mu_a$ and $\mu_s'$  by $k=\sqrt{3\bar\mu_a\mu_s'}$ and $\eta(x)=\delta\mu_a(x)/\bar \mu_a$, where $\delta\mu_a(x)=\bar\mu_a - \mu_a(x)$ and $\bar\mu_a$ is constant~\cite{Markel_2004}.

The forward problem of optical tomography is to determine the energy density $u$ for a given absorption $\eta$. If the medium is illuminated by a point source, the solution to the forward problem is given by the integral equation
\begin{equation}
\label{Lippmann-Schwinger}
u(x)=u_i(x) - k^2\int_\Omega G(x,y)u(y)\eta(y) dy \ , \ \
\ x\in\Omega \ .
\end{equation}
Here $u_i$ is the energy density of the incident diffuse wave which obeys the equation
\begin{equation}
%\label{}
-\lap u_i(x) + k^2 u_i(x) = \delta(x-x_1) \ , \ \ \ x\in\Omega \ , \  x_1\in \partial\Omega 
\end{equation}
and $G$ is the Green's function for the operator $-\lap + k^2$, where $G$ obeys the boundary condition~(\ref{diff_eq_bc}). We may express $G$ as the sum of the fundamental solution 
\begin{equation}
\label{greens_function}
G_0(x,y) = \frac{e^{-k|x-y|}}{4\pi|x-y|} \ , \ \ \  x,y\in \Omega 
\end{equation}
and a solution to the boundary value problem
\begin{eqnarray}
\label{homogeneous}
-\lap F(x) + k^2 F(x) &=& 0 \ ,  \ \ \ x\in \Omega \\
\nonumber
F(x) + \ell \nu\cdot \grad F(x) &=& - G_0(x,y) - \ell \nu\cdot\grad G_0(x,y) \ , \ \ \ x \in \partial \Omega \ ,
\end{eqnarray}
for each $y\in \Omega$. That is, we have $G=G_0 + F$. By~\cite{Gilbarg} Theorem~6.31, $F \in C^2(\bar\Omega)$ when $y$ is in the interior of $\Omega$.

The integral equation~(\ref{Lippmann-Schwinger}) has a unique solution.  If we apply fixed point iteration (beginning with $u_i$), we obtain an infinite series for $u$ of the form
\begin{eqnarray}
\label{forward_series}
\nonumber
u(x)=u_i(x) - k^2\int_\Omega G(x,y)\eta(y)u_i(y) dy \\ + \ k^4 \int_{\Omega\times\Omega}
G(x,y)\eta(y)G(y,y')\eta(y')u_i(y')dydy' + \cdots \ .
\end{eqnarray} 
We will refer to~(\ref{forward_series}) as the Born series and the approximation to $u$ that results from retaining only the linear term in $\eta$ as the Born approximation. 

It will prove useful to express the Born series as a formal power series in tensor powers of $\eta$ of the form
\begin{equation}
\label{phi_def}
\phi = K_1\eta + K_2 \eta\otimes\eta + K_3 \eta\otimes\eta\otimes\eta 
+ \cdots \ ,
\end{equation}
where $\phi=u_i-u$.  Physically, the scattering data $\phi(x_1,x_2)$ is proportional to the change in intensity measured by a point detector at $x_2\in\partial\Omega$ due to a point source at $x_1\in\partial\Omega$~\cite{Markel_2003}. 
Each term in the series is multilinear in $\eta$ and the operator $K_j$ is defined by 
\begin{multline}
\label{def_Kj}
\left(K_jf\right)(x_1,x_2) \\ = (-1)^{j+1} k^{2j}\int_{B_a\times\cdots\times B_a} G(x_1,y_1)G(y_1,y_2)\cdots G(y_{j-1},y_j) 
G(y_j,x_2)f(y_1,\ldots,y_j)dy_1\cdots dy_j \ ,
\end{multline}
for $\ x_1, x_2 \in \partial\Omega$. 
To study the convergence of the series~(\ref{phi_def}) we require an estimate on the $L^\infty$ norm of the operator $K_j$. Let $f\in L^\infty(B_a\times\cdots\times B_a)$.  
Then 
\begin{eqnarray}
\label{eqnarray}
\|K_jf\|_{L^\infty(\partial\Omega\times\partial\Omega)} &= & \sup_{(x_1,x_2)\in\partial\Omega\times\partial\Omega}
\left|(K_jf)(x_1,x_2)\right| \\
\nonumber
&\le& \|f\|_\infty\sup_{(x_1,x_2)\in\partial\Omega\times\partial\Omega}
k^{2j}\int_{B_a\times\cdots\times B_a} |G(x_1,y_1) \cdots G(y_j,x_2)|dy_1\cdots dy_j \ .
\end{eqnarray}
We begin by estimating the above integral for $j=1$,
\begin{eqnarray}
\nonumber
\|K_1\|_\infty & \le& \sup_{(x_1,x_2)\in\partial\Omega\times\partial\Omega}
k^2  \int_{B_a} |G(x_1,y)G(y,x_2)| dy \\
&\le& k^2 |B_a| \sup_{x\in B_a}\sup_{y\in\partial\Omega}|G(x,y)|^2 \ .
\end{eqnarray}
Now, for $j\ge 2$, we take out the first and last factors of $G$ in the integral to obtain
\begin{multline}
\label{Kj_L_infty}
\|K_j\|_\infty 
\le  \sup_{(x_1,x_2)\in \partial\Omega\times\partial\Omega}\sup_{ y_1\in B_a, y_j\in B_a} |G(x_1,y_1)G(y_j,x_2)| \\  \cdot k^{2j}\int_{B_a\times\cdots\times B_a}|G(y_1,y_2) 
\cdots G(y_{j-1},y_j)|dy_1\cdots dy_j \ . 
\end{multline}
We then rewrite this as
\begin{equation}
%\label{Kj_L_infty} 
\|K_j\|_\infty 
\leq \Big(\sup_{x\in B_a}\sup_{y\in\partial\Omega}|G(x,y)| \Big)^2 I_{j-1} \ ,
\end{equation}
where
\begin{equation}
\label{def_I_j}
I_{j-1}= k^{2j}\int_{B_a\times\cdots\times B_a}|G(y_1,y_2)\cdots G(y_{j-1},y_j)|dy_1\cdots dy_j \ .
\end{equation}
Next, we estimate $I_{j-1}$ recursively:
\begin{multline}
I_{j-1} \le\sup_{y_{j-1}\in B_a}k^2\int_{B_a} |G(y_{j-1},y_j)| dy_j \\ \cdot k^{2j-2}\int_{B_a\times\cdots\times B_a}|G(y_1,y_2)\cdots G(y_{j-2},y_{j-1})|dy_1\cdots dy_{j-1} \ ,
\end{multline}
from which it follows that
\begin{equation}
I_{j-1} \le \mu_\infty I_{j-2} \ ,
\end{equation}
where
\begin{equation}
\label{def_mu}
\mu_\infty=\sup_{x\in B_a} k^2 \|G(x,\cdot)\|_{L^1(B_a)} \ .
\end{equation}
We also note that 
\begin{eqnarray}
\label{def_I_1}
I_1 &=& k^4\int_{B_a\times B_a} |G(x,y)|dxdy \\  &\le& k^2 |B_a| \mu_\infty .  \nonumber
\end{eqnarray}
Thus
\begin{equation}
\label{Ij_mu}
I_{j-1} \le k^2 |B_a| \mu_\infty^{j-1} \ .  
\end{equation}
Define 
\begin{equation}
\label{def_nu}
\nu_\infty = k^2 |B_a|\sup_{x\in B_a}\sup_{y\in\partial\Omega}|G(x,y)|^2 \ .
\end{equation}
Note that the smoothness of solutions to~(\ref{homogeneous}) implies that $\mu_\infty$ and $\nu_\infty$ are bounded. Making use of~(\ref{Kj_L_infty}) and (\ref{Ij_mu})
we obtain the following lemma. 
\begin{lemma}
\label{Kj_L_infinity} The operator 
$$ 
K_j:L^\infty(B_a\times\cdots\times B_a) \longrightarrow L^\infty(\partial\Omega\times\partial\Omega)
$$  defined by (\ref{def_Kj}) is bounded and 
$$
\|K_j\|_\infty \le \nu_\infty  \mu_\infty^{j-1} \ ,
$$
where $\mu_\infty$ and $\nu_\infty$ are defined by (\ref{def_mu}) and (\ref{def_nu}), respectively. 
\end{lemma}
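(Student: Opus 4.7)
The plan is to assemble the estimates already derived in the preceding discussion into a clean bound on $\|K_j\|_\infty$. Starting from the definition~(\ref{def_Kj}), I would first pull the $\|f\|_\infty$ factor out of the integral and take absolute values inside, reducing the task to bounding
\[
J_j := \sup_{(x_1,x_2)\in\partial\Omega\times\partial\Omega} k^{2j}\int_{B_a\times\cdots\times B_a} |G(x_1,y_1)G(y_1,y_2)\cdots G(y_{j-1},y_j)G(y_j,x_2)|\,dy_1\cdots dy_j,
\]
since then $\|K_j\|_\infty \le J_j$.

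Next I would split into cases. For $j=1$, a direct estimate pulls the two boundary-interior Green's functions out as a supremum, giving $J_1 \le k^2 |B_a|\sup_{x\in B_a,y\in\partial\Omega}|G(x,y)|^2 = \nu_\infty$, which matches the claimed bound since $\mu_\infty^{0}=1$. For $j\ge 2$, I would use the factorization~(\ref{Kj_L_infty}): take the suprema of $|G(x_1,y_1)|$ and $|G(y_j,x_2)|$ outside the integral, leaving exactly the interior chain integral $I_{j-1}$ defined in~(\ref{def_I_j}). This yields $J_j \le \nu_\infty \, k^{-2}|B_a|^{-1}\, I_{j-1}$ after absorbing the factor $k^2|B_a|$ into $\nu_\infty$.

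Then I would establish the key recursive inequality $I_{j-1}\le \mu_\infty I_{j-2}$ by integrating out the variable $y_j$ against $|G(y_{j-1},y_j)|$ and bounding this inner integral by $\mu_\infty$ uniformly in $y_{j-1}\in B_a$, which is exactly~(\ref{def_mu}). Iterating down to the base case $I_1 \le k^2|B_a|\mu_\infty$ from~(\ref{def_I_1}) gives $I_{j-1}\le k^2|B_a|\mu_\infty^{j-1}$, i.e.~the bound~(\ref{Ij_mu}). Substituting this back yields $\|K_j\|_\infty \le \nu_\infty \mu_\infty^{j-1}$, as claimed.

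The only real subtlety is checking that the recursion matches the base case with the correct exponent, so that $j=1$ and $j\ge 2$ produce the same formula $\nu_\infty\mu_\infty^{j-1}$; this is a matter of careful index bookkeeping rather than a genuine analytic obstacle. Boundedness of $\mu_\infty$ and $\nu_\infty$ themselves follows, as noted in the text, from the fact that the regular part $F$ of the Green's function is $C^2(\bar\Omega)$ by Theorem~6.31 of~\cite{Gilbarg}, so both suprema are finite.
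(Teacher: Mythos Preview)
Your proposal is correct and follows essentially the same approach as the paper: pull out $\|f\|_\infty$, separate the boundary-interior factors from the interior chain $I_{j-1}$, and control $I_{j-1}$ by the recursion $I_{j-1}\le\mu_\infty I_{j-2}$ with base case $I_1\le k^2|B_a|\mu_\infty$. The only cosmetic difference is that the paper keeps the factor $(\sup|G|)^2$ explicit until the end rather than rewriting it as $\nu_\infty k^{-2}|B_a|^{-1}$, but the arithmetic is identical.
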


We now obtain similar $L^2$ estimates on the norm of $K_j$. Let  
$f\in L^2(B_a\times\cdots\times B_a).$ Then
\begin{equation}
%\label{}
\|K_jf\|_{L^2(\partial\Omega\times\partial\Omega)}^2 = \int_{\partial\Omega\times\partial\Omega} |(K_jf)(x_1,x_2)|^2 dx_1 dx_2 \ .
\end{equation}
From (\ref{def_Kj}) and the Cauchy-Schwarz inequality we have
\begin{eqnarray}
%\label{}
\nonumber
|(K_jf)(x_1,x_2)| & \le & k^{2j} \|f\|_{L^2} \Big(\int_{B_a\times\cdots\times B_a}|G(x_1,y_1)G(y_1,y_2)\cdots G(y_j,x_2)|^2 dy_1\cdots dy_j \Big)^{1/2} \ .
\end{eqnarray}
We begin by estimating $\|K_1\|$:
\begin{eqnarray}
\nonumber
\|K_1\|_2 &\le& k^2 \left(\int_{\partial\Omega\times\partial\Omega}\int_{B_a}
|G(x_1,y)G(y,x_2)|^2 dx_1 dx_2 dy \right)^{1/2} \\
&\le& k^2 |B_a|^{1/2} \sup_{x\in B_a}\|G(x,\cdot)\|_{L^2(\partial\Omega)}^2 \ .
\end{eqnarray}
Now, for $j\ge 2$, we find that
\begin{multline} 
\|K_j\|_2^2  
 \le  \sup_{y_1\in B_a, y_j\in B_a} \int_{\partial\Omega\times\partial\Omega} |G(x_1,y_1)G(x_2,y_j)|^2 dx_1 dx_2 \\ \cdot k^{4j}\int_{B_a\times\cdots\times B_a}  |G(y_1,y_2) 
\cdots G(y_{j-1},y_j)|^2 dy_1\cdots dy_j \ ,
\end{multline}
which yields 
\begin{equation} \label{L2_norm_Kj} \|K_j\|_2^2  
\leq \Big(\sup_{x\in B_a}\|G(x,\cdot)\|_{L^2(\partial\Omega)}\Big)^2 J_{j-1}^2 \ ,
\end{equation}
where
\begin{equation}
\label{def_J_j}
J_{j-1}^2 =   k^{4j}\int_{B_a\times\cdots\times B_a}  
|G(y_1,y_2) \cdots G(y_{j-1},y_j)|^2 dy_1\cdots dy_j \ .
\end{equation}
We now show that $J_{j-1}$ is bounded, which implies that the kernel of $K_j$ is in $L^2(\partial\Omega\times\partial\Omega\times B_a \times \cdots\times B_a)$ and thus $K_j$ is a compact operator. To proceed, we estimate $J_{j-1}$ as follows.
\begin{multline}
J_{j-1}^2 \le \sup_{y_{j-1}\in B_a} k^4 \int_{B_a}|G(y_{j-1},y_j)|^2 dy_j
\ k^{4j-4} \\
\cdot \int_{B_a\times\cdots\times B_a}|G(y_1,y_2)\cdots G(y_{j-2},y_{j-1})|^2 dy_1\cdots dy_{j-1} \ .
\end{multline}
Thus, we find that
\begin{equation}
%\label{}
J_{j-1} \le \mu_2 J_{j-2} \ ,
\end{equation}
where
\begin{equation}
\label{def_mu2}
\mu_2=\sup_{x\in B_a} k^2 \|G(x,\cdot)\|_{L^2(B_a)} \ .
\end{equation}
%and
%\begin{equation}
%%\label{}
%J_1= k^4\left(\int_{B_a\times B_a} |G(x,y)|^2 dxdy \right)^{1/2} \ .
%\end{equation}
Noting that $J_1\le k^2 |B_a|^{1/2}\mu_2$, we obtain
\begin{equation}
%\label{}
J_{j-1} \le k^2 |B_a|^{1/2} \mu_2^{j-1} \ .
\end{equation}
Define
\begin{equation}
\label{def_nu2}
\nu_2 = k^2 |B_a|^{1/2} \sup_{x\in B_a}\|G(x,\cdot)\|_{L^2(\partial\Omega)}^2  \ .
\end{equation}
Note that by the smoothness of solutions to~(\ref{homogeneous}) $\mu_2$ and 
$\nu_2$ can be seen to be bounded.
Using~(\ref{L2_norm_Kj}) we have shown the following.
\begin{lemma} 
The operator 
$$ 
K_j:L^2(B_a\times\cdots\times B_a) \longrightarrow L^2(\partial\Omega\times\partial\Omega)
$$  
defined by (\ref{def_Kj}) is bounded and 
$$
\|K_j\|_2 \le \nu_2  \mu_2^{j-1} \ ,
$$
where $\mu_2$ and $\nu_2$ are defined by (\ref{def_mu2}) and (\ref{def_nu2}),  respectively. 
\label{Kj_L_2} 
\end{lemma}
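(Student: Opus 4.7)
The plan is to mirror the $L^\infty$ argument of Lemma~\ref{Kj_L_infinity}, replacing pointwise bounds by Cauchy--Schwarz and replacing the $L^1$ quantities $\mu_\infty, \nu_\infty$ by their $L^2$ analogues $\mu_2, \nu_2$. Starting from the definition~(\ref{def_Kj}) of $K_j f$, I apply the Cauchy--Schwarz inequality in the $(y_1,\ldots,y_j)$ variables to pull out $\|f\|_{L^2}$, obtaining
$$
|(K_jf)(x_1,x_2)|^2 \le k^{4j} \|f\|_{L^2}^2 \int_{B_a\times\cdots\times B_a} |G(x_1,y_1) G(y_1,y_2)\cdots G(y_j,x_2)|^2 dy_1\cdots dy_j .
$$
Integrating this over $(x_1,x_2)\in\partial\Omega\times\partial\Omega$ produces an expression from which I extract the endpoint factors $|G(x_1,y_1)|^2$ and $|G(y_j,x_2)|^2$ by taking a supremum over $y_1,y_j\in B_a$, leaving the interior multiple integral denoted $J_{j-1}^2$ as defined in~(\ref{def_J_j}).

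Next, I iterate on $J_{j-1}$. Factoring out $k^4\int_{B_a}|G(y_{j-1},y_j)|^2 dy_j$ and taking a supremum over $y_{j-1}\in B_a$ isolates the factor $\mu_2^2$ from~(\ref{def_mu2}) and reduces the remaining integral to $J_{j-2}^2$. Together with the base case $J_1 \le k^2|B_a|^{1/2}\mu_2$, the recursion $J_{j-1}\le \mu_2 J_{j-2}$ yields $J_{j-1}\le k^2|B_a|^{1/2}\mu_2^{j-1}$. Combining this with the endpoint bound through~(\ref{L2_norm_Kj}) and the definition~(\ref{def_nu2}) of $\nu_2$ gives the desired estimate $\|K_j\|_2 \le \nu_2 \mu_2^{j-1}$.

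The main point to verify is that $\mu_2$ and $\nu_2$ are actually finite. Writing $G=G_0+F$, the fundamental solution $G_0$ has a singularity of order $|x-y|^{-1}$ whose square is locally integrable in dimension $n\ge 2$ (modulo the usual modification for $n=2$), so $\|G_0(x,\cdot)\|_{L^2(B_a)}$ is bounded uniformly in $x\in B_a$; the regular part $F$ lies in $C^2(\bar\Omega)$ by the cited Theorem~6.31 of~\cite{Gilbarg}, hence contributes a bounded term. For $\nu_2$, since the source point $x\in B_a$ is separated from $\partial\Omega$ (as $B_a\subset\Omega$), both $G_0(x,\cdot)$ and $F(x,\cdot)$ are smooth on $\partial\Omega$, making $\|G(x,\cdot)\|_{L^2(\partial\Omega)}$ bounded uniformly in $x\in B_a$. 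This is the only analytic subtlety; the remaining steps are the algebraic recursion and the Cauchy--Schwarz reduction, both essentially parallel to the $L^\infty$ case.
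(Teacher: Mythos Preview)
Your proposal is correct and follows essentially the same approach as the paper: Cauchy--Schwarz in the $y$-variables to extract $\|f\|_{L^2}$, integration over $\partial\Omega\times\partial\Omega$, pulling out the endpoint factors via a supremum to isolate $J_{j-1}$ as in~(\ref{def_J_j}), the recursion $J_{j-1}\le \mu_2 J_{j-2}$ with base case $J_1\le k^2|B_a|^{1/2}\mu_2$, and the final combination through~(\ref{L2_norm_Kj}) and~(\ref{def_nu2}). Your added justification that $\mu_2$ and $\nu_2$ are finite via the decomposition $G=G_0+F$ is a welcome elaboration of what the paper states in one line; just note that the explicit singularity order $|x-y|^{-1}$ you cite is specific to $n=3$, consistent with the paper's formula~(\ref{greens_function}).
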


It is possible to interpolate between $L^2$ and $L^\infty$ by making use of the Riesz-Thorin theorem~\cite{Ka}. If $0<\alpha < 1$  then
\begin{equation}
\| K_j \|_{2\over{1-\alpha}}\leq \| K_j\|_2^{1-\alpha}\| K_j\|_\infty^\alpha \ .
\end{equation} 
From Lemmas \ref{Kj_L_infinity} and \ref{Kj_L_2}  we have that
\begin{eqnarray} 
\nonumber 
\| K_j \|_{2\over{1-\alpha}} &\leq& \left( \mu_2^{j-1}\nu_2\right)^{1-\alpha} \left( \mu_\infty^{j-1}\nu_\infty\right)^\alpha \\ 
&=& \left(\mu_2^{1-\alpha}\mu_\infty^\alpha\right)^{j-1} \left(\nu_2^{1-\alpha}\nu_\infty^\alpha\right). 
\end{eqnarray}
We thus obtain the following lemma.
\begin{lemma}
\label{Kj_L_p}  
The operator 
$$ K_j:L^p(B_a\times\cdots\times B_a) \longrightarrow L^p(\partial\Omega\times\partial\Omega)$$ defined by (\ref{def_Kj}) is bounded for $2\leq p\leq\infty$ and $$
\|K_j\|_p\le \nu_p  \mu_p^{j-1} \ ,
$$
where for $2<p<\infty$  
$$
\mu_p=\mu_2^{2\over{p}}\mu_\infty^{1-{2\over{p}}}
$$
and  
$$
\nu_p=\nu_2^{2\over{p}}\nu_\infty^{1-{2\over{p}}} \ .
$$
Here $\mu_2,\nu_2,\mu_\infty$ and $\nu_\infty$ are defined by (\ref{def_mu2}), (\ref{def_nu2}), (\ref{def_mu}) and (\ref{def_nu}), respectively.  
\end{lemma}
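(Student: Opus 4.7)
The proof is essentially an application of the Riesz--Thorin interpolation theorem, combined with the two endpoint bounds already obtained in Lemmas \ref{Kj_L_infinity} and \ref{Kj_L_2}. My plan is to identify each $p\in(2,\infty)$ with a suitable interpolation parameter and then read off the stated bound by algebraic rearrangement of the resulting inequality.

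First, I would parameterize. Given $p\in(2,\infty)$, choose $\alpha\in(0,1)$ so that $p = 2/(1-\alpha)$, i.e.\ $\alpha = 1 - 2/p$. Since $K_j$ is bounded from $L^2$ to $L^2$ (Lemma \ref{Kj_L_2}) and from $L^\infty$ to $L^\infty$ (Lemma \ref{Kj_L_infinity}), and since the measure spaces $B_a\times\cdots\times B_a$ and $\partial\Omega\times\partial\Omega$ are $\sigma$-finite, the Riesz--Thorin theorem applies and yields
\[
\|K_j\|_{2/(1-\alpha)} \;\le\; \|K_j\|_2^{1-\alpha}\,\|K_j\|_\infty^{\alpha}.
\]
Plugging in the endpoint estimates $\|K_j\|_2\le \nu_2\mu_2^{j-1}$ and $\|K_j\|_\infty\le \nu_\infty\mu_\infty^{j-1}$ and regrouping powers of $\mu$ and $\nu$, one gets
\[
\|K_j\|_p \;\le\; \bigl(\mu_2^{1-\alpha}\mu_\infty^{\alpha}\bigr)^{j-1}\bigl(\nu_2^{1-\alpha}\nu_\infty^{\alpha}\bigr).
\]

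Second, I would translate back from $\alpha$ to $p$ using $1-\alpha = 2/p$ and $\alpha = 1-2/p$, which immediately gives the definitions of $\mu_p$ and $\nu_p$ in the statement and hence the desired bound $\|K_j\|_p\le \nu_p\mu_p^{j-1}$. The two endpoint cases $p=2$ and $p=\infty$ are just restatements of the previous two lemmas and require no further argument.

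I do not anticipate a serious obstacle: the only real issue is verifying the hypotheses of Riesz--Thorin, namely that $K_j$ is simultaneously well-defined on $L^2\cap L^\infty$ and that the intermediate operator constructed by the theorem coincides with $K_j$ on $L^p$. Both follow from the fact that $K_j$ is an integral operator with a common kernel, so the operators agree on the dense subspace of simple functions supported in $B_a\times\cdots\times B_a$, which has finite measure. The rest of the argument is purely algebraic.
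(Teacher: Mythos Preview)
Your proposal is correct and matches the paper's approach exactly: the paper applies Riesz--Thorin with $p=2/(1-\alpha)$, inserts the endpoint bounds from Lemmas~\ref{Kj_L_infinity} and~\ref{Kj_L_2}, and regroups to obtain $\mu_p=\mu_2^{2/p}\mu_\infty^{1-2/p}$ and $\nu_p=\nu_2^{2/p}\nu_\infty^{1-2/p}$. Your additional remarks about verifying the interpolation hypotheses are more careful than the paper itself, which simply cites Riesz--Thorin without comment.
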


To show that the Born series converges in the $L^p$ norm for any $2\leq p\leq\infty$, we majorize the sum 
\begin{equation}
\sum_j\|K_j \eta\otimes\cdots\otimes\eta\|_{L^p(\partial\Omega\times\partial\Omega)}
\end{equation}
by a geometric series:
\begin{eqnarray}
%\label{}
\nonumber
\sum_j\|K_j \eta\otimes\cdots\otimes\eta\|_{L^p(\partial\Omega\times\partial\Omega)} &\le& \sum_j\|K_j\|_p \|\eta\|_{L^p(B_a)}^j \\
&\le& \frac{\nu_p}{\mu_p} \sum_j \left(\mu_p \|\eta\|_{L^p(B_a)}\right)^j \ ,
\end{eqnarray}
which converges if $\mu_p\|\eta\|_{L^p(B_a)} < 1$. When the Born series converges, we may estimate the remainder as follows:
\begin{eqnarray}
\label{Born_error}
\nonumber
\Big\|\phi - \sum_{j=1}^N K_j \eta\otimes\cdots\otimes\eta \Big\|_{L^p(\partial\Omega\times\partial\Omega)} &\le& \sum_{j=N+1}^\infty \|K_j\eta\otimes\cdots\otimes\eta \|_{L^p(\partial\Omega\times\partial\Omega)} \\
\nonumber
&\le& \sum_{j=N+1}^\infty \nu_p \mu_p^{j-1} \|\eta\|_{L^p(B_a)}^j \\
&=& \frac{\nu_p}{\mu_p} \frac{\left(\mu_p\|\eta\|_{L^p(B_a)}\right)^{N+1}}{1-\mu_p \|\eta\|_{L^p(B_a)}} \ .
\end{eqnarray} 
We summarize the above as
\begin{proposition}
\label{Born_convergence} 
If the smallness condition 
$
\|\eta\|_{L^p(B_a)} < 1/\mu_p
$ 
holds, then the Born series (\ref{phi_def}) converges in the $L^p(\partial\Omega\times\partial\Omega)$ norm for $2\leq p\leq\infty$ and  the estimate (\ref{Born_error}) holds. 
\end{proposition}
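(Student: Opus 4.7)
The plan is to combine the operator norm bound from Lemma \ref{Kj_L_p} with multilinearity of $K_j$ and then majorize by a geometric series. First I would note that the tensor power $\eta\otimes\cdots\otimes\eta$ ($j$ factors) lies in $L^p(B_a\times\cdots\times B_a)$ with norm exactly $\|\eta\|_{L^p(B_a)}^j$, by Fubini. Applying Lemma \ref{Kj_L_p} therefore yields the term-by-term bound
\begin{equation*}
\|K_j\,\eta\otimes\cdots\otimes\eta\|_{L^p(\partial\Omega\times\partial\Omega)} \;\le\; \nu_p\,\mu_p^{j-1}\,\|\eta\|_{L^p(B_a)}^{j}.
\end{equation*}

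Next I would sum these estimates. Since $\mu_p\|\eta\|_{L^p(B_a)}<1$ by hypothesis, the resulting series
\begin{equation*}
\sum_{j=1}^\infty \nu_p\,\mu_p^{j-1}\,\|\eta\|_{L^p(B_a)}^{j} \;=\; \frac{\nu_p}{\mu_p}\sum_{j=1}^\infty\bigl(\mu_p\|\eta\|_{L^p(B_a)}\bigr)^{j}
\end{equation*}
is a convergent geometric series. By completeness of $L^p(\partial\Omega\times\partial\Omega)$, the absolute convergence above implies norm convergence of the Born series (\ref{phi_def}).

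For the remainder estimate, I would apply the triangle inequality to the tail and reuse the same geometric bound starting at index $N+1$, which gives
\begin{equation*}
\Big\|\phi-\sum_{j=1}^N K_j\,\eta\otimes\cdots\otimes\eta\Big\|_{L^p(\partial\Omega\times\partial\Omega)} \;\le\; \frac{\nu_p}{\mu_p}\,\frac{\bigl(\mu_p\|\eta\|_{L^p(B_a)}\bigr)^{N+1}}{1-\mu_p\|\eta\|_{L^p(B_a)}},
\end{equation*}
which is precisely (\ref{Born_error}). There is really no genuine obstacle here: all of the analytic work is packaged into Lemma \ref{Kj_L_p}, and the proposition amounts to no more than assembling the per-term bound, summing geometrically, and invoking completeness of $L^p$. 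The only point worth stating carefully is the passage from the bound on $\|K_j\|_p$ acting on tensor products to the bound on $K_j$ applied to $\eta^{\otimes j}$, which is immediate from the definition of the tensor product norm in $L^p$.
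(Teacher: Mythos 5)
Your proposal is correct and follows essentially the same route as the paper: bound each term by $\nu_p\mu_p^{j-1}\|\eta\|_{L^p(B_a)}^j$ via Lemma \ref{Kj_L_p}, majorize by a geometric series under the smallness condition, and sum the tail from $j=N+1$ to obtain (\ref{Born_error}). The one point you make explicit that the paper leaves implicit --- that $\|\eta\otimes\cdots\otimes\eta\|_{L^p}=\|\eta\|_{L^p(B_a)}^j$ --- is a worthwhile clarification but not a departure from the argument.
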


We note that $L^\infty$ convergence of the Born series for diffuse waves has also been considered in~\cite{Markel_2007}. Corresponding results for the $L^\infty$ convergence of the Born series for acoustic waves have been described by Colton and Kress~\cite{Colton_Kress}.

\section{Inverse Scattering Series}

The inverse scattering problem is to determine the absorption coefficient $\eta$ everywhere within $\Omega$ from measurements of the scattering data $\phi$ on $\partial\Omega$. Towards this end, we make the ansatz that $\eta$ may be expressed as a series in tensor powers of $\phi$ of the form
\begin{equation}
\label{inverse_series}
\eta = \K_1 \phi + \K_2\phi\otimes\phi + \K_3\phi\otimes\phi\otimes\phi + \cdots  \ , 
\end{equation}
where the $\K_j$'s are operators which are to be determined.
To proceed, we substitute the expression { (\ref{phi_def}) for
$\phi$  into  (\ref{inverse_series})}  and equate terms with the same tensor 
power of $\eta$. We thus obtain the relations
\begin{eqnarray}
\K_1K_1 & = & I \ , \label{ident}\\
\K_2K_1 \otimes K_1 + \K_1K_2 & = & 0 \ , \\
\K_3 K_1 \otimes K_1 \otimes K_1 + \K_2K_1\otimes K_2 + \K_2K_2 \otimes K_1 + \K_1K_3 
& = & 0  \ , \\
\sum_{m=1}^{j-1} \K_m \sum_{i_1+\cdots+i_m=j} K_{i_1} \otimes \cdots \otimes K_{i_m} + \K_j K_1 \otimes \cdots \otimes K_1 &=& 0 \  ,
\end{eqnarray}
which may be solved for the $\K_j$'s with the result
\begin{eqnarray}
\K_1 &=& K_1^+ \ , \\
\K_2 &=& -\K_1K_2\K_1\otimes\K_1 \ , \\
\K_3 &=&  -\left(\K_2K_1\otimes K_2 +
\K_2K_2\otimes K_1+\K_1K_3\right)\K_1\otimes\K_1\otimes\K_1 \ , \\
\label{def_K_j}
\K_j &=& - \left(\sum_{m=1}^{j-1} \K_m \sum_{i_1+\cdots+i_m=j} K_{i_1} \otimes \cdots \otimes
K_{i_m}\right) \K_1 \otimes \cdots \otimes \K_1 \ .
\label{def_kappa}
\end{eqnarray}

We will refer to (\ref{inverse_series}) as the inverse scattering series. Here we note several of its properties. First, $K_1^+$ is the regularized pseudoinverse of the operator $K_1$, not a true inverse, so (\ref{ident}) is not satisfied exactly.  The singular value decomposition of the operator $K_1^+$ can be computed analytically for special geometries~\cite{Markel_2004}. Since the operator $\K_1$ is unbounded,  regularization of $K_1^+$ is required to control the ill-posedness of the inverse problem. Second, the coefficients in the inverse series have a recursive structure. The operator $\K_j$ is determined by the coefficients of the Born series $K_1, K_2,\ldots, K_j$. Third, the inverse scattering series can be represented in diagrammatic form as shown in Figure~\ref{fig:diagrams}. A solid line corresponds to a factor of $G$, a wavy line to the incident field, a solid vertex ($\bullet$) to $\K_1\phi$, and the effect of the final application of $\K_1$ is to join the ends of the diagrams. Note that the recursive structure of the series is evident in the diagrammatic expansion which is shown to third order. 

\begin{remark}
Inversion of only the linear term in the Born series is required to compute the inverse series to all orders. Thus an ill-posed nonlinear inverse problem is reduced to an ill-posed linear inverse problem plus a well-posed nonlinear problem, namely the computation of the higher order terms in the series. 
\end{remark}

\begin{figure}[t]
\centering
\includegraphics[width=.7\textwidth,trim=-10 470 -10 -10,keepaspectratio=true]{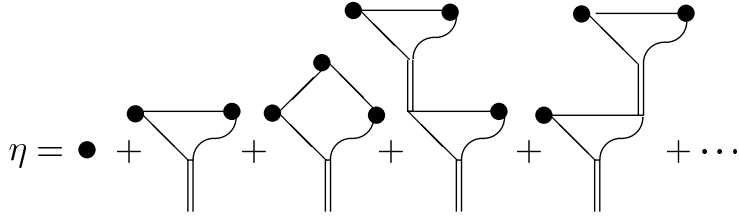} 
\caption{Diagrammatic representation of the inverse scattering series.}
\label{fig:diagrams}
\end{figure}

We now proceed to study the convergence of the inverse series. We begin by developing the necessary estimates on the norm of the operator $\K_j$.  Let $2\le p \le\infty$. Then
\begin{eqnarray}
%\label{}
\nonumber
\|\K_j\|_p &\le& \sum_{m=1}^{j-1}\sum_{i_1+\cdots+i_m=j}\|\K_m\|_p \|K_{i_1}\|_p \cdots \|K_{i_m}\|_p\|\K_1\|_p^j \\
&\le& \|\K_1\|_p^j \sum_{m=1}^{j-1}\sum_{i_1+\cdots+i_m=j} \|\K_m\|_p \nu_p \mu_p^{i_1-1} \cdots
\nu_p \mu_p^{i_m-1}  \ ,
\end{eqnarray}
where we have used Lemma~\ref{Kj_L_p} to obtain the second inequality. Next, we define $\Pi(j,m)$ to be the number of ordered partitions of the integer $j$ into $m$ parts. It can be seen that 
\begin{eqnarray}
\label{def_Pi}
\Pi(j,m) &=& \binom{j-1}{m-1} \ , \\
\sum_{m=1}^{j-1} \Pi(j,m) &=& 2^{j-1} -1 \ .
\end{eqnarray}
Thus in the diagrammatic representation of the inverse series  shown in Figure~\ref{fig:diagrams}, there are $2^{j-1}-1$ diagrams of order $j$ and $\Pi(j,m)$ topologically distinct diagrams of order $j$ and degree $m$ with $m=1,\dots,j-1$.
It follows that 
\begin{eqnarray}
\nonumber
\|\K_j\|_p &\le& \|\K_1\|_p^j \sum_{m=1}^{j-1} \|\K_m\|_p \Pi(j,m)
\nu_p^m \mu_p^{j-m} \\
\nonumber
&\le& \|\K_1\|_p^j \left(\sum_{m=1}^{j-1} \|\K_m\|_p\right) \left(\sum_{m=1}^{j-1} \Pi(j,m)\nu_p^m \mu_p^{j-m}\right) \\
\nonumber
&\le& \nu_p \|\K_1\|_p^j \left(\sum_{m=1}^{j-1} \|\K_{m}\|_p\right)\left( \sum_{m=0}^{j-1} \binom{j-1}{m} \nu_p^m \mu_p^{j-1-m} \right)\\
&=& \nu_p \|\K_1\|_p^j (\mu_p + \nu_p)^{j-1} \sum_{m=1}^{j-1} \|\K_{m}\|_p  \ .
\end{eqnarray}
Thus $\|\K_j\|_p$ is a bounded operator and
\begin{equation}
\label{estimate_K_j}
\|\K_j\|_p \le (\mu_p+\nu_p)^j\|\K_1\|_p^j \sum_{m=1}^{j-1} 
\|\K_{m}\|_p \ .
\end{equation}

The above estimate for $\|\K_j\|_p$ has a recursive structure. It can be seen that 
\begin{equation}
\|\K_j\|_p =   C_j[(\mu_p+\nu_p)\|\K_1\|_p]^j \|\K_1\|_p \ ,
\end{equation}
where, for $j\ge 2$, $C_j$ obeys the recursion relation
\begin{equation}
C_{j+1} = C_{j} + [(\mu_p+\nu_p)\|\K_1\|_p]^{j}C_{j} \ ,  \ \ \  C_2=1 \ .
\end{equation}
It can be seen that
\begin{equation}
C_j = \prod_{m=2}^{j-1} \left(1+[(\mu_p+\nu_p)\|\K_1\|_p]^m \right)  \ .
\end{equation}
Evidently, $C_j$ is bounded for all $j$ since 
\begin{eqnarray}
\nonumber
\ln C_j &\le& \sum_{m=1}^{j-1}\ln \left(1+[(\mu_p+\nu_p)\|\K_1\|_p]^m \right) \\
\nonumber
&\le& \sum_{m=1}^{j-1} [(\mu_p+\nu_p)\|\K_1\|_p]^m \\
&\le& \frac{1}{1-(\mu_p+\nu_p)\|\K_1\|_p} \ ,  
\label{Cbound}
%thanks Paul
\end{eqnarray}
where the final inequality follows if $(\mu_p+\nu_p)\|\K_1\|_p < 1$. A more refined calculation using the Euler-Maclaurin sum formula gives
\begin{eqnarray}
\ln C_j \lesssim \frac{\text{Li}_2(-(\mu_p+\nu_p)\|\K_1\|_p)}{\ln((\mu_p+\nu_p)\|\K_1\|_p)} + \frac{1}{2}\ln((\mu_p+\nu_p)\|\K_1\|_p) \ , \label{Cbound2}
\end{eqnarray}
where $\text{Li}_2$ is the dilogarithm function. We have shown the following.
\begin{lemma}
\label{nonrecursive}
Let $(\mu_p+\nu_p)\|\K_1\|_p < 1$ and $2\le p \le \infty$ . Then the operator
$$
\K_j : L^p(\partial\Omega\times\cdots\times \partial\Omega) \longrightarrow L^p(B_a)
$$
defined by (\ref{def_kappa}) is bounded  and 
\begin{equation*}
\|\K_j \|_p \le C(\mu_p+\nu_p)^j\|\K_1\|_p^j  \ ,
\end{equation*}
where $C=C(\mu_p,\nu_p,\| \K_1 \|_p)$ is independent of $j$.
\end{lemma}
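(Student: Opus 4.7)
The plan is to derive the stated bound essentially by turning the combinatorial estimate built up in the paragraphs preceding the lemma into a closed-form product, and then controlling that product under the smallness hypothesis.

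First I would start from the defining recursion (\ref{def_kappa}) for $\K_j$, apply the triangle inequality term by term, and use Lemma~\ref{Kj_L_p} to bound each factor $\|K_{i_k}\|_p \le \nu_p \mu_p^{i_k-1}$. The internal sum $\sum_{i_1+\cdots+i_m=j}$ produces $\Pi(j,m) = \binom{j-1}{m-1}$ identical contributions of $\nu_p^m \mu_p^{j-m}$, so summing $m$ from $1$ to $j-1$ and using the binomial identity gives $\sum_{m=1}^{j-1}\Pi(j,m)\nu_p^m \mu_p^{j-m} \le \nu_p(\mu_p+\nu_p)^{j-1}$. This, together with the factor of $\|\K_1\|_p^j$ coming from the trailing tensor power, reproduces the recursive estimate (\ref{estimate_K_j}), namely
\begin{equation*}
\|\K_j\|_p \le (\mu_p+\nu_p)^j \|\K_1\|_p^j \sum_{m=1}^{j-1}\|\K_m\|_p.
\end{equation*}

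Next I would convert this recursion into a non-recursive estimate. Setting $x = (\mu_p+\nu_p)\|\K_1\|_p$ and writing $\|\K_j\|_p = C_j\, x^j\, \|\K_1\|_p$, a direct substitution into the recursion yields $C_{j+1} = (1 + x^j)C_j$ with $C_2 = 1$, hence the product formula
\begin{equation*}
C_j = \prod_{m=2}^{j-1}\bigl(1 + x^m\bigr).
\end{equation*}
Taking logarithms and using $\ln(1+y) \le y$ for $y \ge 0$, I would bound $\ln C_j \le \sum_{m=2}^{j-1} x^m$, which under the smallness hypothesis $x<1$ is dominated by the convergent geometric tail $x/(1-x)$. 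Therefore $C_j \le \exp\bigl(x/(1-x)\bigr)$ uniformly in $j$, and the constant $C$ in the lemma can be taken to be $\|\K_1\|_p \exp\bigl((\mu_p+\nu_p)\|\K_1\|_p / (1-(\mu_p+\nu_p)\|\K_1\|_p)\bigr)$, depending only on $\mu_p$, $\nu_p$, and $\|\K_1\|_p$.

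There is no real obstacle here: the entire argument is bookkeeping, since Lemma~\ref{Kj_L_p} already provides the per-factor estimates, the partition count $\binom{j-1}{m-1}$ is standard, and the smallness condition is precisely what makes the geometric series on $\ln C_j$ summable. The one place to stay alert is the combinatorial step: one must recognize that summing the bound over ordered compositions separates cleanly, via $\Pi(j,m)=\binom{j-1}{m-1}$ and the binomial theorem, into the factor $(\mu_p+\nu_p)^{j-1}$. Once that identity is in hand, the rest of the argument is the recursion-to-product trick and a one-line geometric estimate.
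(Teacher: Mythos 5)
Your proposal is correct and follows essentially the same route as the paper: bound the composition sum via Lemma~\ref{Kj_L_p}, count ordered partitions with $\Pi(j,m)=\binom{j-1}{m-1}$ and the binomial theorem to get the recursive estimate (\ref{estimate_K_j}), convert it to the product $C_j=\prod_{m=2}^{j-1}(1+x^m)$, and bound $\ln C_j$ by a geometric series under $x=(\mu_p+\nu_p)\|\K_1\|_p<1$. The only cosmetic difference is that the paper makes explicit the intermediate step of bounding the double sum by the product of the two single sums before applying the binomial identity, which your write-up implicitly assumes.
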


Note that (\ref{Cbound}) and (\ref{Cbound2}) give explicit bounds for $C$. 

We are now ready to state our main results.

\begin{theorem}[Convergence of the inverse scattering series]
\label{thm1}
The inverse scattering series converges in the $L^p$ norm for $2\le p \le \infty$ if $\|\K_1\|_p < 1/(\mu_p+\nu_p)$ and $\|\K_1 \phi\|_{L^p(B_a)} < 1/(\mu_p+\nu_p)$. Furthermore, the following estimate for the series limit $\tilde\eta$ holds
\begin{eqnarray*}
%\label{}
\Big\|\tilde\eta-\sum_{j=1}^N \K_j \phi\otimes\cdots\otimes\phi\Big\|_{L^p(B_a)} \le C \frac{\left((\mu_p+\nu_p)\|\K_1\|_p\|\phi\|_{L^p(\partial\Omega\times\partial\Omega)}\right)^{N+1}}{1-(\mu_p+\nu_p)\|\K_1\|_p\|\phi\|_{L^p(\partial\Omega\times\partial\Omega)}} \ ,
\end{eqnarray*}
where $C=C(\mu_p,\nu_p, \| \K_1\|_p )$  does not depend on $N$ nor on the scattering data $\phi$.
\end{theorem}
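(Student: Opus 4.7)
The plan is to combine the geometric-decay estimate $\|\K_j\|_p \le C[(\mu_p+\nu_p)\|\K_1\|_p]^j$ from Lemma \ref{nonrecursive} with the multiplicative behavior of the $L^p$ norm under tensor products, and then sum the resulting geometric majorant to produce both the convergence statement and the tail estimate.

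First I would note that because $L^p$ norms factor across disjoint coordinates, $\|\phi^{\otimes j}\|_{L^p(\partial\Omega^{2j})} = \|\phi\|_{L^p(\partial\Omega\times\partial\Omega)}^j$. The first hypothesis $\|\K_1\|_p < 1/(\mu_p+\nu_p)$ is exactly what makes Lemma \ref{nonrecursive} applicable, so
\[
\|\K_j \phi^{\otimes j}\|_{L^p(B_a)} \le \|\K_j\|_p\,\|\phi\|_{L^p}^j \le C\,r^{\,j},
\]
where $r := (\mu_p+\nu_p)\|\K_1\|_p\,\|\phi\|_{L^p(\partial\Omega\times\partial\Omega)}$ and $C = C(\mu_p,\nu_p,\|\K_1\|_p)$ is the constant produced by (\ref{Cbound}) or (\ref{Cbound2}), independent of $j$. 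A sharper route exploits the fact that the recursion (\ref{def_kappa}) terminates in $\K_1\otimes\cdots\otimes\K_1$ and hence acts on $\phi^{\otimes j}$ through $(\K_1\phi)^{\otimes j}$; tracking this through the partition sum, using $\Pi(j,m)=\binom{j-1}{m-1}$ and the binomial identity as in the derivation of (\ref{estimate_K_j}), replaces $\|\K_1\|_p\|\phi\|$ by the tighter $\|\K_1\phi\|_{L^p(B_a)}$, which is the quantity appearing in the second hypothesis of the theorem. Either route produces a bound of the form $\|\K_j\phi^{\otimes j}\|_p \le Cr^{\,j}$ with $r<1$.

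Step two is then a standard geometric-series argument: by comparison with $\sum_j C r^j$ the inverse scattering series converges absolutely in $L^p(B_a)$ to a limit $\tilde\eta$, and the tail bound follows from summing the geometric remainder,
\[
\Big\|\tilde\eta - \sum_{j=1}^N \K_j\phi^{\otimes j}\Big\|_{L^p(B_a)} \le \sum_{j=N+1}^\infty \|\K_j\phi^{\otimes j}\|_p \le C\,\frac{r^{N+1}}{1-r},
\]
which is precisely the claimed estimate. The heavy lifting was already done by Lemma \ref{nonrecursive}, specifically in controlling the product $C_j = \prod_{m=2}^{j-1}\bigl(1+[(\mu_p+\nu_p)\|\K_1\|_p]^m\bigr)$ and showing via (\ref{Cbound}) that it stays uniformly bounded in $j$; given that uniform bound, the present theorem is essentially bookkeeping about how $L^p$ norms interact with the multilinear tensor-product operators $\K_j$. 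The only genuine choice is whether to use the crude $\|\K_j\|_p\|\phi\|^j$ bound (yielding the error estimate with $\|\K_1\|_p\|\phi\|$ as written) or the sharper recursive identification of $\K_1\phi$ (yielding $\|\K_1\phi\|$, matching the second hypothesis verbatim).
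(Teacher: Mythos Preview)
Your proposal is correct and mirrors the paper's proof almost exactly: invoke Lemma~\ref{nonrecursive} under the first hypothesis to obtain $\|\K_j\|_p \le C[(\mu_p+\nu_p)\|\K_1\|_p]^j$, use the multiplicativity of the $L^p$ norm on tensor products to get $\|\K_j\phi^{\otimes j}\|_p \le C r^j$, and sum the resulting geometric series for both absolute convergence and the tail bound. Your remark about the sharper route through $\|\K_1\phi\|$ is a bonus the paper itself does not pursue---the paper works only with the cruder $\|\K_1\|_p\|\phi\|_p$---though it would indeed align the argument more closely with the second hypothesis as stated.
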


\begin{proof}
The series $\sum_j \K_j \phi\otimes\cdots\otimes\phi$ converges in norm if
\begin{eqnarray}
\label{inv_convergence}
\sum_j \| \K_j \phi\otimes\cdots\otimes\phi\|_{L^p(B_a)} &\le& 
\sum_j \|\K_j\|_p \|\phi\|_{L^p(\partial\Omega\times\partial\Omega)}^j \\
\nonumber
&\le& C \sum_j \left[(\mu_p+\nu_p)\|\K_1\|_p \|\phi\|_{L^p(\partial\Omega\times\partial\Omega)}\right]^{j}  \ ,
\end{eqnarray}
converges, where the last bound follows from Lemma \ref{nonrecursive}. Clearly, the right hand side of (\ref{inv_convergence}) converges when 
\begin{equation}
(\mu_p+\nu_p) \|\K_1\|_p  \|\phi\|_{L^p(\partial\Omega\time\partial\Omega)} < 1 \ .
\end{equation}
To estimate the remainder we consider
\begin{eqnarray}
\nonumber
\Big\|\tilde\eta-\sum_{j=1}^N \K_j \phi\otimes\cdots\otimes\phi\Big\|_{L^p(B_a)} &\le& \sum_{j=N+1}^\infty \|\K_j\phi\otimes\cdots\otimes\phi\|_{L^p(B_a)} \\
&\le& C \sum_{j=N+1}^\infty \left[(\mu_p+\nu_p)\|\K_1\|_p\|\phi\|_{L^p(\partial\Omega\times\partial\Omega)}\right]^j \ , 
\end{eqnarray}
from which the desired result follows.
\end{proof}

We now consider the stability of the limit of the inverse scattering series under perturbations in the scattering data.

\begin{theorem}[Stability]
\label{thm2}
Let  $\|\K_1\|_p < 1/(\mu_p+\nu_p)$ and let $\phi_1$ and $\phi_2$ be scattering data for which $M\|\K_1\|_p  < 1/(\mu_p+\nu_p)$, where
$M=\max{(\|\phi_1\|_p,\| \phi_2\|_p)}$ and $2\le p \le \infty$.
Let $\eta_1$ and $\eta_2$ denote the corresponding limits of the inverse scattering series. Then the following estimate holds
\begin{equation*}
\|\eta_1-\eta_2\|_{L^p(B_a)} < \tilde{C}  \|\phi_1-\phi_2\|_{L^p(\partial\Omega\times\partial\Omega)}  \ ,
%\frac{C \|\K_1\|_p}{\left(1-(\mu_p+\nu_p)\|\K_1\|_p M \right)^2} \ \epsilon \ ,
\end{equation*}
where $\tilde{C}=\tilde{C}(\mu_p,\nu_p,\| \K_1\|_p,M)$ is a constant which is otherwise independent of $\phi_1$ and $\phi_2$.
\end{theorem}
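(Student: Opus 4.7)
The plan is to parallel the proof of Theorem~\ref{thm1}: write both $\eta_1$ and $\eta_2$ as convergent series in tensor powers of the data and then control the difference of those series. The hypothesis $M\|\K_1\|_p<1/(\mu_p+\nu_p)$ gives $\|\K_1\phi_i\|_p\le M\|\K_1\|_p<1/(\mu_p+\nu_p)$ for $i=1,2$, so Theorem~\ref{thm1} applies to each $\phi_i$ and both
\[
\eta_i=\sum_{j=1}^\infty \K_j\,\phi_i^{\otimes j},\qquad i=1,2,
\]
converge absolutely in $L^p(B_a)$. By linearity of $\K_j$, the problem thus reduces to estimating $\sum_{j\ge 1}\|\K_j(\phi_1^{\otimes j}-\phi_2^{\otimes j})\|_p$.

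The key algebraic device I would use is the tensor-product telescoping identity
\[
\phi_1^{\otimes j}-\phi_2^{\otimes j}=\sum_{k=0}^{j-1}\phi_1^{\otimes k}\otimes(\phi_1-\phi_2)\otimes\phi_2^{\otimes (j-1-k)},
\]
the tensorial analogue of $a^j-b^j=(a-b)\sum_k a^k b^{j-1-k}$. Since $L^p$ norms factor across tensor products by Fubini, each of the $j$ summands has $L^p$ norm at most $M^{j-1}\|\phi_1-\phi_2\|_p$. Applying $\K_j$ and invoking Lemma~\ref{nonrecursive} then yields
\[
\|\K_j(\phi_1^{\otimes j}-\phi_2^{\otimes j})\|_p \le C\,j\,M^{j-1}\big[(\mu_p+\nu_p)\|\K_1\|_p\big]^{j}\|\phi_1-\phi_2\|_p.
\]
Summing over $j$ and writing $x=M(\mu_p+\nu_p)\|\K_1\|_p$, which is strictly less than $1$ by hypothesis, the closed form $\sum_{j\ge 1}j\,x^{j-1}=1/(1-x)^2$ gives the claimed bound with
\[
\tilde C=\frac{C\,(\mu_p+\nu_p)\|\K_1\|_p}{\big(1-M(\mu_p+\nu_p)\|\K_1\|_p\big)^2},
\]
which depends only on $\mu_p,\nu_p,\|\K_1\|_p$ and $M$, as required.

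The only genuinely new ingredient beyond Theorem~\ref{thm1} is the telescoping step: one has to view $\phi\mapsto \K_j\phi^{\otimes j}$ as a homogeneous polynomial of degree $j$ in $\phi$ and split its increment into $j$ pieces each involving exactly one factor of $\phi_1-\phi_2$. This is what produces the extra combinatorial factor of $j$ in the summand, which is handled by differentiating the geometric sum once. I do not foresee any further subtleties beyond careful bookkeeping of this factor $j$ and of the constant $C$ supplied by Lemma~\ref{nonrecursive}; in particular, no additional analytic input about $\K_j$ is needed.
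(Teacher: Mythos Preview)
Your proposal is correct and follows essentially the same route as the paper: the paper uses the very same telescoping identity (\ref{moskow_identity}) to split $\phi_1^{\otimes j}-\phi_2^{\otimes j}$ into $j$ terms, bounds each by $M^{j-1}\|\phi_1-\phi_2\|_p$, applies Lemma~\ref{nonrecursive}, and sums the resulting series $\sum_j j\,x^j$ with $x=(\mu_p+\nu_p)\|\K_1\|_p M$. The only cosmetic difference is in how the constant is written out at the end.
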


\begin{proof}
We begin with the estimate
\begin{equation}
\|\eta_1-\eta_2\|_{L^p(B_a)} \le \sum_j \|\K_j(\phi_1\otimes\cdots\otimes\phi_1-
\phi_2\otimes\cdots\otimes\phi_2) \|_{L^p(B_a)} \ .
\end{equation}
Next, we make use of the identity
\begin{eqnarray}
\label{moskow_identity}	
\nonumber
&&\phi_1\otimes\cdots\otimes\phi_1 -
\phi_2\otimes\cdots\otimes\phi_2 =
\psi\otimes\phi_2\otimes\cdots\otimes\phi_2  
+ \phi_1\otimes\psi\otimes\phi_2\otimes\cdots\otimes\phi_2 \\ 
&& + \cdots  + \phi_1\otimes\phi_1\otimes\cdots\otimes\psi\otimes\phi_2 + \phi_1\otimes\phi_1\otimes\cdots\otimes\phi_1\otimes\psi \ ,
\end{eqnarray}
where $\psi=\phi_1-\phi_2$. It follows that
\begin{eqnarray}
\nonumber
\|\eta_1-\eta_2\|_{L^p(B_a)} &\le& \sum_j \sum_{k=1}^j \|\K_j\|_p \|\phi_1\otimes
\cdots\otimes\phi_1\otimes\psi%^{\overset{k}{\bigvee}}\!
\otimes\phi_2\otimes\cdots\otimes\phi_2 \|_{L^p} \\
&=& \sum_j j \|\K_j\|_p M^{j-1}\|\psi\|_{L^p(\partial\Omega\times\partial\Omega)} \ ,
\end{eqnarray}
where $\psi$ is in the $k$th position of the tensor product. Using Lemma~\ref{nonrecursive}, we have
\begin{eqnarray}
\nonumber
\|\eta_1-\eta_2\|_{L^p(B_a)} &\le& C \|\K_1\|_p \|\psi\|_{L^p(\partial\Omega\times\partial\Omega)} \sum_j j \left[(\mu_p + \nu_p)\|\K_1\|_p M\right]^j \\
&\le&  \|\K_1\|_p \|\phi_1-\phi_2\|_{L^p(\partial\Omega\times\partial\Omega)} \frac{C}{\left[1-(\mu_p+\nu_p)\|\K_1\|_pM\right]^2} \ .
\end{eqnarray}
The above series converges when $(\mu_p+\nu_p)\|\K_1\|_p M < 1$, which holds by hypothesis. 
\end{proof}

\begin{remark}
It follows from the proof of Theorem~\ref{thm2} that $\tilde C$ is proportional to $\|\K_1\|_p$. Since regularization sets the scale of $\|\K_1\|_p$, it follows that the stability of the nonlinear inverse problem is controlled by the stability of the linear inverse problem.
\end{remark}

The limit of the inverse scattering series does not, in general, coincide with $\eta$. We characterize the approximation error as follows.

\begin{theorem}[Error characterization]
\label{error_estimate}
Suppose that $\| \K_1 \|_p< 1/(\mu_p +\nu_p)$, $\|\K_1\phi\|_{L^p(B_a)} < 1/(\mu_p+\nu_p)$ and $2\le p \le \infty$. Let $\mathcal{M}=\max{( \| \eta\|_{L^p(B_a)} , \| \K_1 K_1 \eta\|_{L^p(B_a)} )}$ and assume that $\mathcal{M} < 1/(\mu_p+\nu_p)$. Then the norm of the difference between the partial sum of the inverse series and the true absorption obeys the estimate
\begin{equation*}
\Big\|\eta-\sum_{j=1}^N\K_j \phi\otimes\cdots\otimes\phi\Big\|_{L^p(B_a)} \le
C \|(I-\K_1 K_1)\eta \|_{L^p(B_a)} + 
\tilde C{[(\mu_p+\nu_p)\|\K_1\|_p\|\phi\|]^N \over{ 1-(\mu_p+\nu_p)\|\K_1\|_p\|\phi\|_p }}\ ,
\end{equation*}
where $C=C(\mu_p,\nu_p,\| \K_1\|_p, \mathcal{M})$ and $\tilde C=\tilde C (\mu_p,\nu_p\| \K_1\|_p)$ are independent of $N$ and $\phi$.
\end{theorem}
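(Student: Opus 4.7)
The natural first move is to split the error with the triangle inequality,
\[
\Big\|\eta - \sum_{j=1}^N\K_j\phi\otimes\cdots\otimes\phi\Big\|_{L^p(B_a)} \le \|\eta - \tilde\eta\|_{L^p(B_a)} + \Big\|\tilde\eta - \sum_{j=1}^N\K_j\phi\otimes\cdots\otimes\phi\Big\|_{L^p(B_a)},
\]
where $\tilde\eta$ denotes the limit of the inverse scattering series. The truncation term on the right is exactly what Theorem~\ref{thm1} controls; the factor $[(\mu_p+\nu_p)\|\K_1\|_p\|\phi\|]^{N+1}$ appearing there is dominated by $[(\mu_p+\nu_p)\|\K_1\|_p\|\phi\|]^N$ (that quantity being strictly less than one), which yields the second summand of the claim with $\tilde C=\tilde C(\mu_p,\nu_p,\|\K_1\|_p)$. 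The substance of the proof is therefore to show that $\|\eta - \tilde\eta\|_{L^p(B_a)} \le C\,\|(I-\K_1 K_1)\eta\|_{L^p(B_a)}$ with $C$ depending only on $\mu_p$, $\nu_p$, $\|\K_1\|_p$, and $\mathcal{M}$.

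For this, I would substitute the Born series $\phi = \sum_i K_i\eta\otimes\cdots\otimes\eta$ into $\tilde\eta = \sum_j \K_j \phi\otimes\cdots\otimes\phi$ and regroup by total tensor power of $\eta$. The order-one contribution is $\K_1 K_1 \eta$, while for $n \ge 2$ it reads
\[
\K_n K_1^{\otimes n}\eta^{\otimes n} + \Big(\sum_{m=1}^{n-1}\K_m\sum_{i_1+\cdots+i_m=n}K_{i_1}\otimes\cdots\otimes K_{i_m}\Big)\eta^{\otimes n}.
\]
Plugging in the defining recursion $\K_n = -\bigl(\sum_{m=1}^{n-1}\K_m\sum K_{i_1}\otimes\cdots\otimes K_{i_m}\bigr)\K_1^{\otimes n}$ and using the elementary identity $(\K_1 K_1)^{\otimes n}\eta^{\otimes n} = (\K_1 K_1 \eta)^{\otimes n}$ collapses the order-$n$ piece to
\[
\Big(\sum_{m=1}^{n-1}\K_m\sum_{i_1+\cdots+i_m=n}K_{i_1}\otimes\cdots\otimes K_{i_m}\Big)\bigl[\eta^{\otimes n}-(\K_1 K_1\eta)^{\otimes n}\bigr].
\]
Setting $R := I - \K_1 K_1$, this combines with the order-one term $\K_1 K_1\eta = \eta - R\eta$ to give an explicit series representation of $\eta - \tilde\eta$ in which every summand carries either a single $R\eta$ (from order one) or a factor of $\eta^{\otimes n}-(\K_1 K_1 \eta)^{\otimes n}$.

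From here, the rest is estimation. The identity~(\ref{moskow_identity}) applied with $a=\eta$ and $b=\K_1 K_1\eta$ gives $\|\eta^{\otimes n}-(\K_1 K_1\eta)^{\otimes n}\|_{L^p}\le n\mathcal{M}^{n-1}\|R\eta\|_{L^p}$, while the operator norm of $\sum_{m=1}^{n-1}\K_m\sum K_{i_1}\otimes\cdots\otimes K_{i_m}$ is at most $\sum_m\|\K_m\|_p\binom{n-1}{m-1}\nu_p^m\mu_p^{n-m}$; inserting Lemma~\ref{nonrecursive} and using $(\mu_p+\nu_p)\|\K_1\|_p<1$ to absorb the $[(\mu_p+\nu_p)\|\K_1\|_p]^m$ factors then bounds this by a multiple of $(\mu_p+\nu_p)^n$. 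Summing yields
\[
\|\eta-\tilde\eta\|_{L^p(B_a)} \le \|R\eta\|_{L^p}\Big(1 + C\sum_{n=2}^\infty n(\mu_p+\nu_p)^n \mathcal{M}^{n-1}\Big),
\]
and this series converges because $(\mu_p+\nu_p)\mathcal{M}<1$ by hypothesis. The main obstacle is the algebraic step that isolates the factor $\eta^{\otimes n}-(\K_1 K_1\eta)^{\otimes n}$ at every order $n\ge 2$; once the recursion for $\K_n$ has been arranged to produce exactly this cancellation structure, the remainder is a direct application of the telescoping identity from Theorem~\ref{thm2} and the combinatorial bounds that already underlie Lemma~\ref{nonrecursive}.
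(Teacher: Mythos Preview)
Your proposal is correct and follows essentially the same route as the paper: substitute the Born series into the inverse series, use the recursion for $\K_n$ to produce the factor $\eta^{\otimes n}-(\K_1 K_1\eta)^{\otimes n}$ at each order, bound that factor via the telescoping identity~(\ref{moskow_identity}), and then sum using the partition count and Lemma~\ref{nonrecursive}. The only cosmetic differences are that you invoke the triangle inequality at the outset rather than at the end, and you absorb the factors $[(\mu_p+\nu_p)\|\K_1\|_p]^m\le 1$ directly inside the binomial sum instead of first splitting $\sum_m\|\K_m\|_p\,\Pi(j,m)\nu_p^m\mu_p^{j-m}$ as the paper does; neither change affects the argument.
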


\begin{proof}
The hypotheses imply that the regularized inverse scattering series 
\begin{equation}
\label{def_tilde_eta}  
\tilde\eta=\sum_j \K_j \phi\otimes\cdots\otimes\phi 
\end{equation}
converges.  The forward series
\begin{equation}
\phi=\sum_j K_j \eta\otimes\cdots\otimes\eta 
\end{equation}
also converges by hypothesis, so we can substitute it 
into (\ref{def_tilde_eta}) to obtain
\begin{equation}
\tilde\eta = \sum_j \tilde\K_j \eta\otimes\cdots\otimes\eta \ ,
\end{equation}
where 
\begin{equation}
\hspace{-235pt}
\tilde\K_1 = \K_1 K_1 \ ,
\end{equation}
and 
\begin{equation}
\tilde\K_j = \left( \sum_{m=1}^{j-1} \K_m \sum_{i_1+\cdots+i_m=j} K_{i_1} \otimes \cdots \otimes K_{i_m} \right)+ \K_j K_1 \otimes \cdots 
\otimes K_1 \ , 
\end{equation}
for $j\geq 2$. 
From (\ref{def_K_j}) it follows that 
\begin{equation}
\tilde\K_j = \sum_{m=1}^{j-1} \K_m \sum_{i_1+\cdots+i_m=j} K_{i_1} \otimes \cdots \otimes K_{i_m}\left(I- \K_1 K_1 \otimes \cdots \otimes \K_1 K_1 \right) ,
\end{equation}
and so we have
\begin{equation}
\tilde\eta = \K_1 K_1 \eta + \tilde\K_2 \eta\otimes\eta + \cdots \ .
\end{equation}
We thus obtain
\begin{equation}
\eta -\tilde\eta = (I-\K_1 K_1)\eta -\K_1  K_2 \left( \eta\otimes\eta -\K_1 K_1\eta\otimes \K_1 K_1\eta\right)+ \cdots  \ ,
\end{equation}
which yields the estimate
\begin{equation}
\|\eta-\tilde\eta\|_p \le \sum_j\sum_{m=1}^{j-1} \sum_{i_1+\cdots+i_m=j}\|\K_m\|_p \|K_{i_1}\|_p \cdots \|K_{i_m}\|_p \| \eta\otimes\cdots\otimes\eta  - \K_1 K_1\eta \otimes\cdots\otimes\K_1 K_1 \eta \|_p\ .
\end{equation}
Next, we put
\begin{equation} 
\psi=\eta-\K_1 K_1\eta
\end{equation} 
and make use of the identity (\ref{moskow_identity}) to obtain
\begin{equation}
\| \eta\otimes\cdots\otimes\eta  - \K_1 K_1\eta \otimes\cdots\otimes\K_1 K_1 \eta \|_p \leq j \mathcal{M} ^{j-1}
\| \psi\|_p  \ .
\end{equation}
We then have
\begin{eqnarray}
\nonumber
\|\eta-\tilde\eta\|_{L^p(B_a)} &\le& 
\sum_j \sum_{m=1}^{j-1} \sum_{i_1+\cdots+i_m=j}\|\K_m\|_p \|K_{i_1}\|_p \cdots \|K_{i_m}\|_p j \mathcal{M}^{j-1} \|\psi\|_p\nonumber \\
&\le & \sum_j \sum_{m=1}^{j-1} j \mathcal{M}^{j-1} \|\K_m\|_p \Pi(j,m) \nu_p^m \mu_p^{j-m} \|\psi\|_p \ , 
\end{eqnarray}
where we have used Lemma \ref{Kj_L_p} and $\Pi(j,m)$ denotes the number of ordered partitions of $j$ into $m$ parts.  Making use of (\ref{def_Pi}), we have
\begin{eqnarray}
\nonumber 
\|\eta-\tilde\eta\|_p 
&\le &\nu_p \sum_j \|\psi\|_p j \mathcal{M}^{j-1} \left(\sum_{m=1}^{j-1} \|\K_m\|_p \right) \left(\sum_{m=0}^{j-1} \binom{j-1}{m} \nu_p^m \mu_p^{j-1-m} \right) \\
&\le & \|\psi\|_p \sum_j \sum_{m=1}^{j-1} j \mathcal{M}^{j-1}  (\mu_p + \nu_p)^j
\|\K_m\|_p\ .
\end{eqnarray}
We now apply Lemma \ref{nonrecursive} to obtain 
\begin{equation} 
\|\eta-\tilde\eta\|_p 
\le  C \|\psi \|_p\sum_j \sum_{m=1}^{j-1} j \mathcal{M}^{j-1}  (\mu_p + \nu_p)^{m+j} \|\K_1\|_p^m \ ,
\end{equation}
since the  constant $C$ from the lemma is independent of $j$. Performing the sum over $m$, we have
\begin{equation} 
\label{sum_m}
\|\eta-\tilde\eta\|_p 
\le  C \|\psi \|_p\sum_j j \mathcal{M}^{j-1}  (\mu_p + \nu_p)^{j} {  (\mu_p + \nu_p)^j\| \K_1\|_p^j-1\over{(\mu_p + \nu_p)\| \K_1\|_p -1}} \ ,
\end{equation}
which is bounded since $\mathcal{M}(\mu_p+\nu_p)<1$ and $(\mu_p+\nu_p)\| \K_1\|_p < 1$ by hypothesis. Eq.~(\ref{sum_m}) thus becomes 
\begin{equation}
\|\eta-\tilde\eta\|_p 
\le  C \|  (I-\K_1 K_1)\eta  \|_p\ \ ,
\end{equation}
where $C$ is a new constant which depends on $\mu_p,\nu_p,\mathcal{M}$ and $\| \K_1\|_p$. 
Finally, using the triangle inequality, we can account for the error which arises from cutting off the remainder of the series. We thus obtain 
\begin{eqnarray}
\nonumber
& &\Big\|\eta-\sum_{j=1}^N\K_j \phi\otimes\cdots\otimes\phi\Big\|_{L^p(B_a)} \\ 
&\le&
\Big\|\eta-\sum_{j}\tilde\K_j \eta\otimes\cdots\otimes\eta\Big\|_{L^p(B_a)} 
\nonumber
+ \sum_{j=N+1}^\infty \| \K_j \phi\otimes\cdots\otimes\phi\|_{L^p(B_a)} \\
&\le& C \|(I-\K_1 K_1) \eta \|_{L^p(B_a)} 
+ \tilde{C}\frac{\left((\mu_p+\nu_p)\|\K_1\|_p\|\phi\|_p\right)^{N+1}}{1-(\mu_p+\nu_p)\|\K_1\|_p\|\phi\|_p} \ .
\end{eqnarray}
\end{proof}

Evidently, in the above proof, we have shown the following.
\begin{corollary}
Suppose the hypotheses of Theorem~\ref{error_estimate} hold, then the norm of the difference between the sum of the inverse series and the absorption $\eta$ can be bounded by 
\begin{equation*}
\|\eta-\tilde\eta\|_{L^p(B_a)} \le C \|(I-\K_1 K_1) \eta \|_{L^p(B_a)} \ ,
\end{equation*}
where $\tilde\eta$ is the limit of the inverse scattering series and $C=C(\mu_p,\nu_p,\| \K_1\|_p, \mathcal{M})$ is a constant which is otherwise independent of $\eta$.
\end{corollary}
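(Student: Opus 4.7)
The plan is to observe that the desired bound has essentially already been derived as an intermediate step inside the proof of Theorem~\ref{error_estimate}, before the final triangle inequality that introduces the truncation remainder. So the corollary requires no new work; it just needs to be read off from that proof.

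More concretely, I would proceed as follows. First, note that under the hypotheses the forward Born series for $\phi$ converges by Proposition~\ref{Born_convergence}, and the inverse scattering series converges by Theorem~\ref{thm1}, so the limit $\tilde\eta = \sum_j \K_j \phi\otimes\cdots\otimes\phi$ is well defined. Substituting the Born expansion into the inverse series and reorganizing exactly as in the proof of Theorem~\ref{error_estimate} gives
\begin{equation*}
\tilde\eta = \K_1 K_1 \eta + \sum_{j\ge 2} \tilde\K_j \, \eta\otimes\cdots\otimes\eta, \qquad \tilde\K_j = \sum_{m=1}^{j-1} \K_m \!\!\sum_{i_1+\cdots+i_m=j}\!\! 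K_{i_1}\otimes\cdots\otimes K_{i_m}\bigl(I-\K_1 K_1 \otimes\cdots\otimes \K_1 K_1\bigr).
\end{equation*}
Then, setting $\psi = \eta - \K_1 K_1 \eta$ and applying the telescoping identity (\ref{moskow_identity}) to the factor $I - \K_1 K_1 \otimes\cdots\otimes \K_1 K_1$ bounds each $(j-1)$-fold tensor difference by $j\mathcal{M}^{j-1}\|\psi\|_p$, where $\mathcal{M}=\max(\|\eta\|_p,\|\K_1 K_1\eta\|_p)$. Combining this with the operator-norm bound from Lemma~\ref{Kj_L_p} and the partition count $\Pi(j,m)=\binom{j-1}{m-1}$ yields exactly the chain of inequalities that the proof of Theorem~\ref{error_estimate} carries out up through equation (\ref{sum_m}).

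The crucial observation is that under the hypotheses $\mathcal{M}(\mu_p+\nu_p)<1$ and $(\mu_p+\nu_p)\|\K_1\|_p<1$, both geometric sums (over $m$ and then over $j$) converge, giving
\begin{equation*}
\|\eta-\tilde\eta\|_{L^p(B_a)} \le C\,\|(I-\K_1 K_1)\eta\|_{L^p(B_a)},
\end{equation*}
with $C=C(\mu_p,\nu_p,\|\K_1\|_p,\mathcal{M})$. This is precisely the corollary, without the additional triangle inequality step that Theorem~\ref{error_estimate} uses to absorb the truncation remainder $\sum_{j>N}\K_j\phi\otimes\cdots\otimes\phi$. The only real content of the corollary statement is the observation that, when one does not truncate, the second (remainder) term in Theorem~\ref{error_estimate} can be sent to zero, leaving just the approximation-error term. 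There is no new obstacle: the hard part is already done in the previous theorem, and the proof of the corollary amounts to one line pointing out that step.
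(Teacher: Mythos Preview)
Your proposal is correct and matches the paper's approach exactly: the paper itself introduces this corollary with the single sentence ``Evidently, in the above proof, we have shown the following,'' meaning that the bound $\|\eta-\tilde\eta\|_p \le C\|(I-\K_1 K_1)\eta\|_p$ is precisely the intermediate inequality obtained in the proof of Theorem~\ref{error_estimate} just prior to applying the triangle inequality to include the truncation remainder. Your outline of the steps (substitution of the forward series, the identity for $\tilde\K_j$, the telescoping estimate via~(\ref{moskow_identity}), and the double geometric sum under the hypotheses $\mathcal{M}(\mu_p+\nu_p)<1$ and $(\mu_p+\nu_p)\|\K_1\|_p<1$) is a faithful summary of that portion of the proof.
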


\begin{remark}
We note that, as expected, the above corollary shows that regularization of $\K_1$ creates an error in the reconstruction of $\eta$.  For a fixed regularization, 
the relation $\K_1 K_1=I$ holds on a subspace of $L^p(B_a)$ which, in practice, is finite dimensional. By regularizing $\K_1$ more weakly, the subspace will become larger, eventually approaching all of $L^p$. However, in this instance, the estimate in Theorem~\ref{error_estimate} would not hold since $\|\K_1\|_p$ would become so large that the inverse scattering series would not converge.  Nevertheless, Theorem~\ref{error_estimate} does describe what can be reconstructed exactly, namely those $\eta$ for which $\K_1 K_1 \eta =I$. That is, if we know apriori that $\eta$ belongs to a particular finite-dimensional subspace of $L^p$, we can choose $\K_1$ to be a true inverse on this subspace. Then, if $\| \K_1\|_p $ and $\|\K_1 \phi\|_{L^p} $ are sufficiently small, the inverse series will recover $\eta$ exactly. 
\end{remark}

\section{Numerical Results}

\begin{figure}[t] 
\centering
\includegraphics[width=0.7\textwidth]{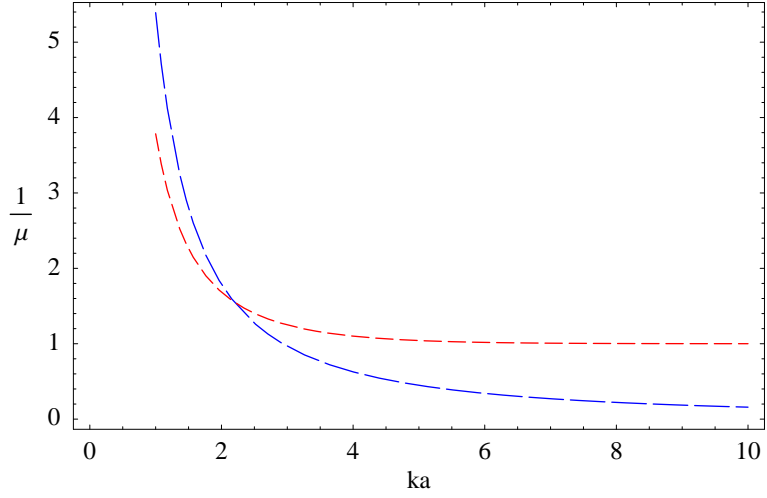} 
\caption{Radius of convergence of the Born series in the $L^2$ (--- --- ---) and $L^\infty$ (-- -- --) norms.}
\label{fig:Born}
\end{figure}

%The $L^2$ constants $\mu_2$ and $\nu_2$ may be calculated for special domains when the eigenfunctions and eigenvalues of the Laplacian are known explicitly.
 
It is straightforward to compute the constants $\mu_p$ and $\nu_p$ for $\mathbb{R}^3$. In this case $G(x,y)=G_0(x,y)$ for $x,y\in \Omega$ and the measurements are carried out on $\partial\Omega$ where $\Omega \subset \mathbb{R}^3$. We then have
\begin{eqnarray}
%\label{}
\nonumber
\mu_\infty &=& \frac{k^2}{4\pi}\int_{B_a} \frac{e^{-k|x|}}{|x|}dx \\ 
&=&1-(1+ka)e^{-ka} \ .
\end{eqnarray}
The condition for $L^\infty$ convergence of the Born series becomes
\begin{equation}
\label{eta_infinity_radius}
\|\eta\|_{L^\infty} < \frac{1}{1-(1+ka)e^{-ka}} \ .
\end{equation}
We note that when $ka\ll 1$, we have $\|\eta\|_{L^\infty} \lesssim O(1/(ka)^2)$. In the opposite limit, when $ka\gg 1$, we obtain $\|\eta\|_{L^\infty} \lesssim 1$ and the radius of convergence is asymptotically independent of $ka$. 

For the $L^2$ case we obtain
\begin{eqnarray}
%\label{}
\nonumber
\mu_2 &=& \frac{k^2}{4\pi}\left(\int_{B_a} \frac{e^{-2k|x|}}{|x|^2} dx\right)^{1/2} \\
&=& k^2 e^{-ka/2} \left(\frac{\sinh(ka)}{4\pi k}\right)^{1/2} \ .
\end{eqnarray}
Thus the Born series converges in $L^2$ if
\begin{equation}
\label{eta_2_radius}
\|\eta\|_{L^2} < \frac{e^{ka/2}}{k^2}\left(\frac{4\pi k}{\sinh(ka)}\right)^{1/2} \ .
\end{equation}
When $ka\ll 1$, we have $\|\eta\|_{L^2} \lesssim O(1/(ka)^2)$, which is similar to the $L^\infty$ estimate. However, when $ka\gg 1$ we find that $\|\eta\|_{L^2} \lesssim O(1/(ka)^{3/2})$, which is markedly different than the $L^\infty$ result. The dependence of the radius of convergence on $ka$ is shown in Figure~\ref{fig:Born}.

\begin{figure}[t] 
\centering
\includegraphics[width=0.7\textwidth]{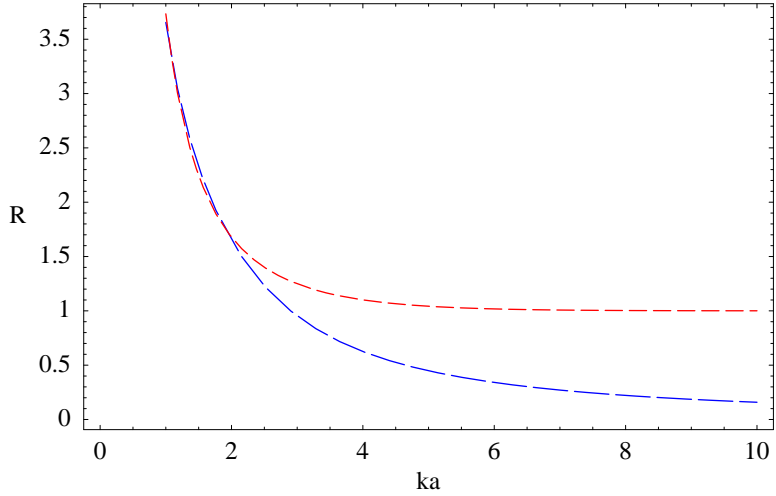} 
\caption{Radius of convergence of the inverse scattering series in the $L^2$ (--- --- ---) and $L^\infty$ (-- -- --) norms.}
\label{fig:inverse_series}
\end{figure}

We now give an application of Theorem~\ref{thm1}. Making use of the Green's function~(\ref{greens_function}), we have
\begin{eqnarray}
\label{nu}
\nu_\infty &\le& k^2 |B_a| \frac{e^{-2k \dist(\partial\Omega, B_a)}}{(4\pi\dist(\partial\Omega, B_a))^2}  \ , \\
\nu_2 &\le& k^2 |\partial\Omega| |B_a|^{1/2}  \frac{e^{-2k \dist(\partial\Omega, B_a)}}{(4\pi\dist(\partial\Omega, B_a))^2} \ .
\end{eqnarray}
Note that $\nu_p$ is exponentially small.
We have computed the radius of convergence $R_p=1/(\mu_p + \nu_p)$ of the inverse scattering series when $\Omega$ is a ball of radius $2a$, where $a$ is the radius of the ball containing the support of $\eta$ and both balls are concentric. The dependence of $R_p$ on $ka$ is shown in Figure~\ref{fig:inverse_series}. It can be seen that $R_\infty \lesssim 1$ and $R_2 \lesssim O(1/(ka)^{3/2})$ when $ka \gg 1$.

\section{Scalar Waves}

In this section we will analyze the inverse scattering series for the case of scalar waves. We will see that although the algebraic structure of the inverse Born series for diffuse waves is similar to that for propagating scalar waves, its analytic structure is different. This distinction is a reflection of the underlying physical difference between the short-range propagation of diffuse waves and the long-range propagation of scalar waves. 

Consider the time-independent wave equation 
\begin{equation}
\label{wave_eq}
\lap u(x) + k^2(1+\eta(x))u(x) = 0 \ , \ \ \  x \in \mathbb{R}^3
\end{equation}
for the propagation of a scalar wave field $u$. 
The wave number $k$ is a positive constant and $\eta(x)$ is assumed to be supported in a closed ball $B_a$ of radius $a$, with $\eta(x) \ge -1$. The field $u$ is taken to obey the outgoing Sommerfeld radiation condition. The solution to (\ref{wave_eq}) is given by the integral equation
\begin{equation}
u(x)=u_i(x) + k^2\int_{\mathbb{R}^3} G(x,y)u(y)\eta(y) dy \ ,
\end{equation}
where $u_i$ is the incident field and the Green's function is given by
\begin{equation}
\label{wave_green_fcn}
G(x,y)=\frac{e^{ik|x-y|}}{4\pi|x-y|} \ .
\end{equation}

\begin{figure}[t] 
\centering
\includegraphics[width=0.7\textwidth]{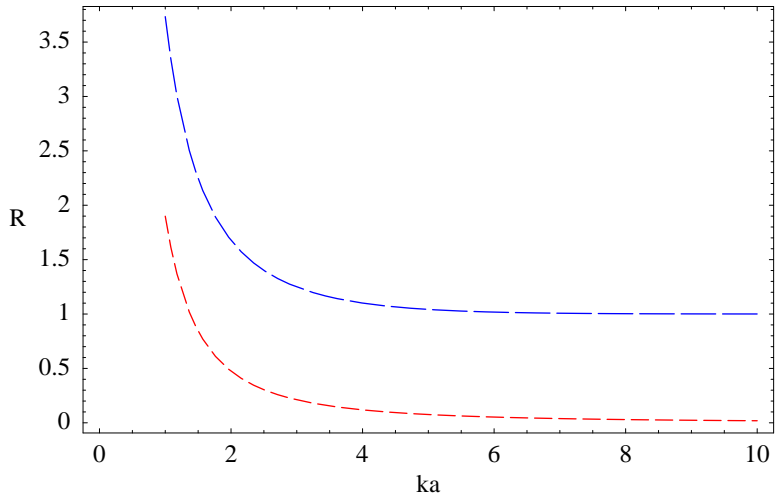} 
\caption{Radius of convergence of the inverse scattering series in the $L^\infty$ norm for diffuse (--- --- ---) and propagating waves (-- -- --).}
\label{fig:comparison}
\end{figure}

The convergence, stability and approximation error of the inverse scattering series corresponding to~(\ref{wave_eq}) is also governed by Theorems~\ref{thm1},     \ref{thm2} and ~\ref{error_estimate}. However, 
the values of the constants $\mu_p$ and $\nu_p$ must be modified. The modifications are a reflection of the physical difference between the oscillatory nature of the Green's function~(\ref{wave_green_fcn}) and the exponentially decaying diffusion Green's function. It can be seen that
\begin{eqnarray}
\mu_{\infty} &=& \frac{1}{2} (ka)^2 \ , \\
\nu_{\infty} &\le&   \frac{k^2 |B_a|}{(4\pi\dist(\partial\Omega, B_a))^2} \ .
\end{eqnarray}
Thus the radius of convergence is $R_\infty = O(1/(ka)^2)$ for $ka\gg 1$. This means that for small scatterers, the inverse scattering series has a large radius of convergence, in contrast to the case of diffuse waves where $R_\infty = O(1)$ even for large scatterers. The dependence of $R_\infty$ on $ka$ is illustrated in Figure 4 for both diffuse and propagating scalar waves. 

\section{Discussion}

A few final remarks are necessary. Numerical evidence suggests that the estimates we have obtained on the convergence of the inverse series are conservative since $\|\K_1\|_p \gg 1$ in practice~\cite{Markel_2003,Markel_2004}. Nevertheless, insight into the structure of the ISP has been obtained. In particular, the inverse series is well suited to the study of waves which do not propagate over large scales such as diffuse waves in random media and evanescent electromagnetic waves in nanoscale systems~\cite{Carney_2003,Carney_2004}. In the latter case, the inverse scattering series has also been shown to be computationally effective~\cite{Panasyuk_2006}. To analyze this problem, our methods must be extended to treat the Maxwell equations. Other areas of future interest include the study of the inverse Bremmer series~\cite{Malcolm_2006}.

\subsection*{Acknowledgements}
We are very grateful to A. Hicks for making this collaboration possible and to C. Epstein for his many useful suggestions. S. Moskow was supported by the NSF grant DMS-0749396. J. Schotland was supported by the NSF grant DMS-0554100 and by the AFOSR grant FA9550-07-1-0096.

\end{document}